\documentclass[12pt,oneside,a4paper]{article}
\usepackage[utf8]{inputenc}
\usepackage[T2A,T1]{fontenc}
\usepackage[english]{babel}
\usepackage{amssymb}
\usepackage{amsthm}
\usepackage{amsmath}
\usepackage{hyperref}
\hypersetup{
    colorlinks,
    linkcolor=red,
    citecolor=green,
    urlcolor=blue
}
\usepackage{verbatim}
\usepackage{enumitem}
\usepackage[dvips]{graphicx}
\usepackage{geometry}
\usepackage[numbers,sort&compress]{natbib}
\usepackage{tikz}
\usepackage{tkz-graph}
\usepackage{caption}
\captionsetup{font=small,labelfont=bf,justification=justified,format=plain}

\newtheorem{thm}{Theorem}%[chapter]
\newtheorem{lem}[thm]{Lemma}

\newtheorem{prop}[thm]{Proposition}

\newtheorem{defn}[thm]{Definition}

\newtheorem{question}[thm]{Question}
\newtheorem{conj}[thm]{Conjecture}

\newtheorem{ex}[thm]{Example}

\newtheorem{rem}[thm]{Remark}

%\numberwithin{thm}{section}

%\makeatletter
%\@addtoreset{thm}{section}
%\makeatother

\renewcommand{\leq}{\leqslant}
\renewcommand{\geq}{\geqslant}
\renewcommand{\le}{\leqslant}
\renewcommand{\ge}{\geqslant}

\begin{document}

\author{Ivailo Hartarsky\thanks{D\'epartement de Math\'ematiques et Applications, \'Ecole Normale Sup\'erieure, CNRS, PSL Research University, Sorbonne Univ\'ersit\'es, 45 rue d'Ulm, Paris, France.\newline E-mail address: \textsf{ivailo.hartarsky@ens.fr}}}

\title{Maximal Bootstrap Percolation Time on the Hypercube via Generalised Snake-in-the-Box}

\pagestyle{plain}

\maketitle
\begin{abstract}
In $r$-neighbour bootstrap percolation, vertices (sites) of a graph $G$ become ``infected'' in each round of the process if they have $r$ neighbours already infected. Once infected, they remain such. An initial set of infected sites is said to percolate if every site is eventually infected. We determine the maximal percolation time for $r$-neighbour bootstrap percolation on the hypercube for all $r \geq 3$ as the dimension $d$ goes to infinity up to a logarithmic factor. Surprisingly, it turns out to be $\frac{2^d}{d}$, which is in great contrast with the value for $r=2$, which is quadratic in $d$, as established by Przykucki~\cite{Przykucki12}. Furthermore, we discover a link between this problem and a generalisation of the well-known Snake-in-the-Box problem.
\end{abstract}
MSC: 05D99 (Primary), 94B65, 60C05 (Secondary)\\
Keywords: Bootstrap percolation, Snake-in-the-Box, hypercube.

\section{Introduction}
Bootstrap percolation was introduced in 1979 by Chalupa, Leath and Reich~\cite{Chalupa79} as a simplified monotone version of ferromagnetic dynamics and it is in particular related to Glauber dynamics of the Ising model. The general $r$-neighbour model on a graph $G$ is defined as follows. Consider an initial subset of the vertices (sites) that are declared infected. At each time step every site becomes infected if it has at least $r$ neighbours already infected and infected site always remain such. We say that percolation occurs if eventually all sites of $G$ are infected. In the most classical setting the initially infected sites are selected randomly and independently with probability $p$ and the graph $G$ is taken to be a finite $d$-dimensional grid $\{1,\ldots, n\}^d$.

One of the founding results in the field was by Aizenman and Liebowitz~\cite{Aizenman88}, who determined the order of the critical probability of percolation for $r=2$, all fixed $d$ as $n\rightarrow\infty$. The simplest setting, $r=d=2$ was then studied by Holroyd~\cite{Holroyd03}, who proved that the threshold is sharp and determined the leading term of the critical probability. Further work on that threshold was done and the order of the second term is now known~\cite{Gravner08,Hartarsky18Rob}. However, the case $r>2$ required a lot more care, because the stable sets of infected sites are no longer simple boxes. An important step was done by Cerf and Cirillo~\cite{Cerf99} and Cerf and Manzo~\cite{Cerf02}, who proved the counterpart of the result of~\cite{Aizenman88}. Their methods were later used in conjunction with Holroyd's to determine the leading term of the critical probability for all fixed $r$ and $d$ when $n\rightarrow\infty$~\cite{Balogh12}.

A less standard and more combinatorial facet of bootstrap percolation consists in keeping $n$ fixed and letting $d$ grow to infinity, so that the simplest case is the high dimensional hypercube. This setting was explored by Balogh and Bollob\'as~\cite{Balogh06}, and later Balogh, Bollob\'as, and Morris~\cite{Balogh10} determined the critical probability of percolation with high precision for $r=2$, and also for high dimensional grids with size not necessarily equal to $2$. However, the situation for $r>2$ remains entirely open due to the lack of tools to handle the more complicated stable sets, since the method of~\cite{Cerf99,Cerf02} is no longer of relevance.

Alongside the probabilistic perspective on bootstrap percolation, purely combinatorial extremal questions have been widely investigated. Such deterministic bounds have proved useful for obtaining probabilistic results as well, e.g. in~\cite{Balogh10}. However, it has become customary to expect the unexpected, as, more often than not, answers to such extremal questions are very counterintuitive and very far from ``common'' behaviour. Some deal with the classical $2$ dimensional $2$-neighbour model, like~\cite{Morris09,Benevides13,Benevides15}, but others~\cite{Riedl10,Morrison17,Morrison18} focus on the hypercube. The typical quantities assessed are the extremal sizes of (extremal) (non-)percolating sets, extremal percolation time or mixtures of those.

One such result by Przykucki~\cite{Przykucki12} concerns the maximal percolation time on the hypercube under the $2$-neighbour model. Contrary to the result of~\cite{Benevides15} that the maximal percolation time for the same model in two dimensions is of the order of the size of the whole grid considered, for the hypercube of dimension $d$ the maximal percolation time was determined to be merely $\left\lfloor\frac{d^2}{3}\right\rfloor$. Based on the construction in~\cite{Przykucki12} one might expect that setting $r=3$ would simply allow one to gain another factor of order $d$ and the percolation time to be at most cubic in the dimension. Most surprisingly, we prove that there is a drastic jump between $r=2$ and $r=3$ for this question. We show that the maximal percolation time goes from close to the trivial lower bound $1$, as found in~\cite{Przykucki12}, to close to the trivial upper bound $2^d$. More precisely, we prove that for all $r>2$ the maximal percolation time is equal to $\frac{2^d}{d}$ up to a logarithmic factor.

The lower bound is based on an entirely explicit construction, though quite elaborate, as it also uses a previously known non-trivial one. An essential ingredient for this bound is a new link we establish between bootstrap percolation and the very well-known snake-in-the-box problem, which concerns long induced paths and cycles in the hypercube. It was introduced by Kautz in the late 50s~\cite{Kautz58} and has a wide range of applications, namely in coding, error-correction and others. It was first proved in~\cite{Glagolev70,Evdokimov69} that the maximal length of a snake-in-the-box is $2^d$ up to a constant factor, though its correct asymptotic value is not yet known.

We will rather be concerned with a natural generalisation of the problem, introduced by Singleton in~\cite{Singleton66}. It asks for a long path (or cycle) in the hypercube such that sites at distance at least $k$ along the path are also at distance at least $k$ in the hypercube as well. Hence, the snake-in-the-box problem corresponds to $k=2$. These paths or cycles are usually referred to as snakes or circuit codes of spread $k$, but we will call them $k$-snakes and we will only need $3$-snakes for our result. The maximal length of $k$-snakes was also studied extensively over the last half a century. A very easy upper bound for $k=3$, mentioned already in~\cite{Singleton66} is $\frac{2^d}{d-2}$, is fairly close to the right asymptotics. The right exponent $2^{d-o(d)}$ was determined in~\cite{Kurljandcik71} and Evdokimov determined the maximal length of a $3$-snake to be $\frac{2^d}{d}$ up to a logarithmic factor~\cite{Evdokimov76}. This result is at the base of our construction. For a more recent overview, which is very complete from the mathematical perspective, on snake-in-the-box and related problems, the reader is referred to the survey~\cite{Evdokimov14} by the same author.

We should also note that, curiously, another link between bootstrap percolation and the snake-in-the-box (with spread $k=2$) problem has been observed in~\cite{Shende15}, although it is along an entirely different direction and very specific to $r=2$.

\section{Notation}
In this section, we introduce the notation necessary for the proof of the main result.

We denote by $M_r(d)$ our quantity of interest -- the maximal time of $r$-neighbour bootstrap percolation on the $d$-dimensional hypercube $\{0,1\}^d$ with its usual graph structure. Denote by $d(\cdot,\cdot)$ the associated graph distance induced by the norm $\|x\|=\sum_{i=1}^dx_i$ for $x=(x_i)\in\{0,1\}^d$.

\paragraph{Snakes}
\begin{defn} For $k\ge 1$ a \emph{$k$-snake} is a path $(S_t)_{t=0}^T$ in the hypercube such that, for all $t\ge 0$ and $t'\in[t+k,T]$ it holds that $d(S_t,S_{t'})\ge k$. We call $T$ the \emph{length} of $S$ and refer to the parameter $t$ as the \emph{time}.
\end{defn}
\begin{rem}
For a $k$-snake of length greater than $k$ this definition implies that for $t,t'\in[0,T]$ such that $|t-t'|\leq k$ one has $d(S_t,S_{t'})=|t-t'|$. Indeed, each step increases the distance by at most $1$, but after $k$ steps we are required to be at distance at least $k$, so that increasing by $1$ was always necessary. Hence, snakes are $k$-locally isometric to paths.
\end{rem}
\begin{defn}
We denote by $s(d)$ the maximal length of a $3$-snake in the $d$-dimensional hypercube.
\end{defn}

The following bound was established by Evdokimov~\cite{Evdokimov76}.
\begin{prop}
\label{prop:Evdokimov}
For all $d\geq 3$
\[s(d)\geq \frac{2^d}{d(\log d)^2}\,.\]
\end{prop}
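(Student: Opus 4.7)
The plan is to reduce the construction of a long 3-snake to a coding-theoretic question on admissible transition sequences. A path $(S_t)_{t=0}^T$ in the hypercube is determined by its transition sequence $(c_1,\dots,c_T)\in[d]^T$ via $S_t=S_{t-1}\oplus e_{c_t}$, where $e_j$ denotes the $j$-th standard basis vector. The spread-3 condition then translates into a purely combinatorial statement: for every window $(c_{t+1},\dots,c_{t+\ell})$ of length $\ell\ge 3$, the vector $e_{c_{t+1}}\oplus\dots\oplus e_{c_{t+\ell}}$ must have Hamming weight at least $3$, equivalently, at least three coordinates appear an odd number of times in the window.

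The first step would be to fix a linear code $C\subset\mathbb{F}_2^d$ of minimum distance at least $3$ and codimension $m=O(\log d)$, for instance a Hamming or shortened BCH code, and to partition $\{0,1\}^d$ into the $2^m$ cosets of $C$. The restriction of the hypercube to a single coset is isomorphic to a lower dimensional hypercube structure on the support of a minimal generating set of $C$, and on each coset one builds (recursively) a 3-snake. Vertices belonging to distinct cosets are automatically at Hamming distance at least $3$, thanks to the minimum distance of $C$, so the main task is to chain the per-coset snakes into a single long snake. The second step is to design short bridging sequences of transitions that move from one coset to the next while preserving the spread-3 condition with respect to the already constructed portion of the snake; this can be done by choosing bridges that pass through a carefully selected subgroup of translations and by verifying that all windows crossing a bridge still produce an odd-weight pattern of size at least $3$.

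The main obstacle is the accounting of length losses across the recursion. Iterating the construction $\Theta(\log d)$ times, until the residual dimension drops to $O(\log d)$ and the problem can be solved directly, incurs a multiplicative loss at each level from the finite size of the bridges and the codimension of $C$. A careful bookkeeping shows the loss per level is $O(\log d)$, and since there are $\Theta(\log d)$ levels, one arrives at the claimed bound $\tfrac{2^d}{d(\log d)^2}$. The hardest part of the proof is the simultaneous verification that no \emph{any} window of length $\ell\ge 3$ crossing one or several bridges violates the spread condition, which requires a case analysis on the relative position of $\ell$ with respect to the lengths of the blocks and the bridges, combined with the minimum-distance guarantee of $C$ to rule out long-range collisions.
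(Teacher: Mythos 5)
First, note that the paper does not prove this proposition at all: it is quoted as a known theorem of Evdokimov \cite{Evdokimov76}, whose proof is a substantial explicit construction, so any short self-contained argument should be viewed with suspicion from the outset. Your sketch unfortunately rests on a structural error. You fix a linear code $C\subseteq\mathbb{F}_2^d$ of minimum distance $3$ and claim that ``vertices belonging to distinct cosets are automatically at Hamming distance at least $3$, thanks to the minimum distance of $C$''. This is exactly backwards: the minimum distance guarantees that two distinct vertices in the \emph{same} coset $x+C$ are at distance at least $3$ (their difference is a nonzero codeword), whereas two vertices in distinct cosets can perfectly well be adjacent (take $x$ and $x+e_i$; since minimum distance $\geq 3$ forces $e_i\notin C$, they lie in different cosets). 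Consequently a coset of $C$ is an independent set --- in fact a distance-$3$ code --- so ``the restriction of the hypercube to a single coset'' carries no edges and is certainly not a lower-dimensional hypercube; there is no path, let alone a $3$-snake, contained in a single coset. Indeed, every single step of any path in the hypercube changes the coset of $C$. The decomposition on which your whole plan rests is therefore structurally impossible, and the separation between the per-coset pieces that you invoke to control long-range collisions simply does not exist.

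Second, even granting some repaired version of the recursion, the bookkeeping does not produce the stated bound. Losing codimension (or bridge length) $O(\log d)$ at each of $\Theta(\log d)$ levels costs a factor of order $2^{\Theta((\log d)^2)}=d^{\Theta(\log d)}$ in the length, which is vastly larger than the tolerated loss $d(\log d)^2$. Moreover, your accounting never explains where the dominant factor $\frac1d$ comes from --- the one that matches the sphere-packing upper bound $s(d)\leq\frac{2^d}{d-2}$ of \cite{Singleton66} and is the real content of the statement. The honest course here is to cite Evdokimov's construction, as the paper does, rather than to attempt to reprove it in a paragraph.
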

	
\paragraph{Hypercubes}
For the remainder of this paper, we employ the convenient notation used by Przykucki in~\cite{Przykucki12}. Though it may appear very technical at first, it will prove itself to be very practical.
\begin{defn}
For any finite sequence $(a_i)\in \{0,1,*\}^n$ we denote 
\[[a_1,\ldots, a_n]:=\{(b_1,\ldots,b_n)\in\{0,1\}^n\mid \forall\, 1\leq i\leq n,\, a_i\neq *\Rightarrow b_i=a_i\}\]
and call all such sets \emph{subcubes} (of the hypercube $\{0,1\}^n$). We extend this notation to the concatenation of two sequences $(a_i)$ and $(b_i)$ as
\[[a_1,\ldots, a_n][b_1,\ldots, b_k]:=[a_1,\ldots, a_n,b_1,\ldots, b_k]\,.\]
$[a_1,\ldots, a_n]^k$ stands for $[a_1,\ldots,a_n]\ldots[a_1,\ldots,a_n]$ repeating $k$ times. We will abusively identify singletons with their unique element, when they arise in this notation, i.e. when $*$ is never used.
\end{defn}
\begin{ex}
The hypercube of dimension $d$ is thus denoted by $[*]^d$ and $[1,0,1][*]^{d-6}[0]^3$ is its $d-6$ dimensional subcube whose first three coordinates are $1$, $0$, and $1$ in that order, and whose last three coordinates are all $0$.

We may write $[1,0,1][0]^2$ for both the site $(1,0,1,0,0)$ and the subcube $\{(1,0,1,0,0)\}$.
\end{ex}

\begin{defn}
For any sequence of sequences $\left((a^j_i)_{i=1}^{l_j}\right)_{j=1}^n$ on the alphabet $\{0,1,*\}$, we define their \emph{permutation}
\[\overline{[a^1_1,\ldots,a^1_{l_1}]\ldots[a^n_1,\ldots,a^n_{l_n}]}=\bigcup_{\sigma\in\mathfrak{S}_n}\left[a^{\sigma(1)}_1\ldots,a^{\sigma(1)}_{l_{\sigma(1)}}\right]\ldots\left[a^{\sigma(n)}_1\ldots,a^{\sigma(n)}_{l_{\sigma(n)}}\right]\,\]
where $\mathfrak{S}_{n}$ is the symmetric group.
\end{defn}
\begin{ex}The elements of $[0]\overline{[0]^2[1,0]}[*]$ are $(0,0,0,1,0,0)$, $(0,0,0,1,0,1)$, $(0,0,$ $1,0,0,0)$, $(0,0,1,0,0,1)$, $(0,1,0,0,0,0)$, $(0,1,0,0,0,1)$.
\end{ex}
It is important to note that the permutation does not act inside each component of the concatenation and moves the whole blocks without interlacing them, so that $\overline{[1]^2[0]^2}$, $\overline{[1,1][0,0]}$ and $\overline{[1,0]^2}$ are all different sets with $6$, $2$ and $1$ elements respectively.

\section{The Main Result}
\label{sec:main}
In this section, we prove our main result determining the maximal percolation time for all $r>2$ in the hypercube up to a $\log d$ factor.
\begin{thm}
\label{th:main}
For all $r\geq3$
\[M_r(d)=\frac{2^d}{d} (\log d)^{-O(1)}\,.\]
\end{thm}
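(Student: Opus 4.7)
The plan is to establish matching bounds on $M_r(d)$, with the substantive content lying in the lower bound, where the connection to Evdokimov's 3-snake enters.

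\textbf{Lower bound.} I would take a 3-snake $(S_t)_{t=0}^L$ of length $L = s(d') \geq 2^{d'}/(d'(\log d')^2)$ living in a subhypercube of dimension $d'$ close to $d$. The plan is to design an initial infected set $A_0 \subseteq \{0,1\}^d$ consisting of helper sites in the neighbourhoods of the snake sites, together with a reservoir in the remaining $d - d'$ auxiliary coordinates. The design must be tuned so that, for each $t \geq 1$, the snake site $S_t$ acquires exactly $r$ infected neighbours at time $t$ (one being $S_{t-1}$, and $r-1$ being helpers), while no site is triggered prematurely. The 3-snake condition $d(S_t, S_{t'}) \geq 3$ for $|t - t'| \geq 3$ is the engine driving this scheduling: helpers placed within hypercube distance one of $S_t$ remain at distance $\geq 2$ from any $S_{t'}$ far along the snake, which forbids the shortcuts that would otherwise collapse the schedule. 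For $r > 3$, the extra $r - 3$ infected neighbours per step would be supplied from the auxiliary coordinates, for instance through sites produced by flipping auxiliary bits of $S_t$, so that a single 3-snake suffices for every $r \geq 3$. Once the snake has been traversed in time $\Omega(L) = \Omega(2^d/(d(\log d)^2))$, the remaining sites should infect in $O(d)$ further rounds, which is negligible against $L$.

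\textbf{Upper bound.} The goal is $M_r(d) \leq C \cdot 2^d/d$, matching the trivial Singleton bound $s(d) \leq 2^d/(d-2)$. Given a percolating $A_0$ with percolation time $T$, I would select a site $u$ infected at time $T$ and trace back a chain of immediate predecessors $u = v_T, v_{T-1}, \ldots, v_0 \in A_0$ with $v_t$ infected by time $t$ and $v_t v_{t+1}$ an edge of the hypercube. The crux is to promote this path into a 3-snake (or something close enough) of length comparable to $T$. The assumption $r \geq 3$ is used precisely here: every new infection requires three earlier infected neighbours, which restricts where shortcuts in the predecessor chain can lie and, after case analysis on short cycles, lets one extract a subsequence satisfying the distance condition $d(v_t, v_{t'}) \geq 3$ for $|t - t'| \geq 3$ up to constants. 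The bound then follows from the Singleton estimate $s(d) = O(2^d/d)$.

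\textbf{Main obstacle.} I expect the hardest step to be the lower bound construction. The 3-snake property buys separation only between snake sites; the combinatorial bookkeeping must additionally control mutual interactions between helpers assigned to nearby snake sites and between helpers and near-future snake sites $S_{t'}$ with $|t' - t| \in \{1,2\}$, where the 3-snake hypothesis gives no separation at all. I would handle this by making heavy use of the bracket and permutation notation introduced in Section 2 so that the helper patterns admit a compact and verifiable description. An additional subtlety is the extension from $r = 3$ to general $r$ without resorting to $k$-snakes for $k > 3$, which would degrade the snake length; using auxiliary coordinates for the extra infected neighbours is the natural workaround, but one must check that the reservoir both triggers the right helpers on time and does not itself percolate faster than the snake.
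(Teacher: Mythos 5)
Your lower bound plan is essentially the paper's: a maximal $3$-snake in a subcube of slightly smaller dimension, two helper neighbours per snake site placed in auxiliary coordinates (the paper cycles through three disjoint pairs of auxiliary directions according to $t \bmod 3$, which is exactly the device that resolves the interactions you correctly flag between helpers of snake sites with $|t-t'|\in\{1,2\}$), plus a reservoir near the snake's end that percolates the rest of the cube only once the final snake site is reached. Your handling of $r>3$ via auxiliary coordinates is also morally the paper's, though the paper does it more cleanly as a black-box reduction: place an $M_3(d-r+3)$-configuration in a codimension-$(r-3)$ subcube and infect its entire complement, so that every site of the subcube has exactly $r-3$ permanently infected outside neighbours and the $r$-neighbour process restricted to the subcube coincides with the $3$-neighbour process.

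Your upper bound, however, has a genuine gap, and it is not the paper's argument. The predecessor chain $v_0,\ldots,v_T$ (with $v_t$ infected at time exactly $t$ and $v_tv_{t+1}$ an edge) is \emph{not} a $3$-snake, nor close to one: nothing prevents $v_{t'}$ from being adjacent to $v_t$ for $t'\ge t+3$, since $v_{t'}$ may simply be waiting for its $r$-th infected neighbour while already adjacent to the long-infected $v_t$. Worse, ``extracting a subsequence'' destroys the path structure (a subsequence of a hypercube path is not a hypercube path), and the Singleton bound $s(d)\le 2^d/(d-2)$ applies only to genuine $3$-snakes, so the case analysis you defer to is where the entire difficulty lives, and I see no way to carry it out. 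The paper proves $M_r(d)\le(4r+2)2^d/d$ by a purely counting argument that never mentions snakes: each site has at most $r-1$ neighbours infected two or more steps earlier, so there are at most $(r-1)2^d$ edges with time-difference at least $2$; calling a site \emph{bad} if it has at least $d/2$ such incident edges, there are at most $2^{d+2}(r-1)/d$ bad sites, hence most infection times are witnessed by a \emph{good} site, each of which has at least $d/2$ neighbours infected within one time step of it; summing over one good site per such time and dividing by the multiplicity $3$ of the count would exceed $2^d$ sites if the time were too large. You should replace your traceback argument with an averaging argument of this kind, or be prepared to prove a genuinely new structural theorem about infection chains.
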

The upper and lower bounds are established independently. We start with the lower one, which will follow by linking the bootstrap process to long $3$-snakes.
\begin{lem}
\label{lem:time}
Let $d\geq 15$ be odd. Then,
\[s(d-10)\leq M_3(d)\]
\end{lem}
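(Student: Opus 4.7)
The plan is to embed a long $3$-snake into $\{0,1\}^d$ and use it as the backbone of a slow percolation process, with the ten extra dimensions serving as reservoirs for the auxiliary infections required by the three-neighbour rule. Writing $\{0,1\}^d = \{0,1\}^{d-10} \times \{0,1\}^{10}$, let $(S_t)_{t=0}^T$ be a $3$-snake of length $T = s(d-10)$ in $\{0,1\}^{d-10}$ (guaranteed by Proposition~\ref{prop:Evdokimov}), embed it as $(S_t, 0^{10})_{t=0}^T$, and denote by $e_1, \ldots, e_{10} \in \{0,1\}^{10}$ the standard basis vectors.

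The snake-related part of the initial set $I$ consists of $(S_0, 0^{10})$ together with two helper rails $\{(S_t, e_1), (S_t, e_2) : 0 \leq t \leq T\}$. Assuming no other vertex of the slice $y = 0^{10}$ becomes infected early, a direct induction on $t$ shows that $(S_t, 0^{10})$ is infected precisely at time $t$: its three infected neighbours at that moment are the snake predecessor $(S_{t-1}, 0^{10})$ (infected at time $t-1$, and adjacent in the cube because consecutive snake sites are at hypercube distance one) together with the two helpers $(S_t, e_1)$ and $(S_t, e_2)$. The $3$-snake property forbids shortcuts: when $|t - t'| \geq 3$ we have $d(S_t, S_{t'}) \geq 3$, so no vertex of the cube can be adjacent to three snake sites and infection cannot leapfrog along the slow slice.

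To make $I$ percolate the entire cube, I would further augment it with carefully chosen seeds in the helper slices (those with $y \neq 0^{10}$), using the remaining eight helper dimensions. A natural template is to seed a configuration known to percolate a subcube far from the snake slice, for instance inside $[*]^{d-10}[1]^{10}$ or a union of such subcubes, and let the infection trickle outward. The delicate constraint is that no vertex $(v, 0^{10})$ with $v \notin \{S_0, \ldots, S_t\}$ should acquire three infected neighbours before time $T$: contributions from the snake coordinates are limited by the $3$-snake property, while those from the helper coordinates must be controlled by the choice of auxiliary seeds.

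The main obstacle is precisely this last step: checking simultaneously that the augmented $I$ percolates $\{0,1\}^d$ and that no non-snake vertex of the slow slice fires early. I expect the verification to proceed by induction on time, tracking both the infected subset of the slow slice $y = 0^{10}$ and the number of infected helper-direction neighbours of every vertex of that slice. Once this is established, $(S_T, 0^{10})$ cannot be infected before time $T$, so $M_3(d) \geq T = s(d-10)$, as claimed.
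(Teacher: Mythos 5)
Your backbone mechanism is essentially the one the paper uses: a long $3$-snake in a low-codimension subcube, two initially infected ``helper'' neighbours for each snake site in spare directions, and the $3$-snake property to rule out shortcuts, so that the slice vertices $(S_t,0)$ light up one per time step. That part of your sketch is sound (modulo details such as omitting the helpers at the very end of the snake, and the paper's cyclic use of three different helper direction pairs, which streamlines the ``each helper has a unique partner at distance $2$'' bookkeeping).

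The genuine gap is the step you yourself flag as ``the main obstacle'': you never construct the auxiliary configuration, and the template you propose for it --- ``seed a configuration known to percolate a subcube far from the snake slice and let the infection trickle outward'' --- is the wrong specification. If the auxiliary seeds percolate anything on their own, that region fills up starting at time $0$ and then attacks the snake slice from many directions, destroying the one-site-per-step mechanism (and in any case every vertex of the hypercube is within distance $d$ of that region, so the interference cannot be quarantined for $2^{d-10}/(d\log^2 d)$ steps). What is actually needed is a set that is \emph{stable} --- together with the helpers it creates no new infection whatsoever --- yet becomes percolating when augmented by the single vertex at (or just before) the snake's end. This is the heart of the paper's proof: the sets $J_1$ (two full codimension-$3$ subcubes), $J_2$ (two isolated sites) and $J_3$ (two sparse families of sites pairwise at distance $4$) are engineered so that (i) $I\setminus\{[0]^9S_0\}$ is stable, which requires a case analysis showing $I_0$, $J_2$, $J_3$ have pairwise no common neighbours and no site has three neighbours inside any one of them, and (ii) $J_1\cup J_2\cup J_3\cup\{[0]^9S_{T-1}\}$ percolates, via an explicit cascade. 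This also forces the technical normalisation of the snake's endpoint (the paper's Lemma~\ref{cor:snakemodified}, which is where the loss from $d-9$ to $s(d-10)$ and the condition $\|S_t\|>3$ for $t<T-3$ come from) and the parity hypothesis on $d$ (needed to define $J_3$). Without an explicit stable-but-triggerable configuration and the accompanying stability verification, you cannot conclude that your initial set percolates at all, so the lower bound on $M_3(d)$ does not follow.
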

Let us sketch the idea before we turn to the proof of the lemma. We would like have a long $3$-snake becoming infected one site at a time. To achieve that we fix a long $3$-snake in a subcube of codimension $9$ and infect neighbours of that snake in new directions in order to have two for each site of the snake. Then we only need to have the beginning of the snake initially infected. We also make sure that next to the end of the snake there is a configuration of lots of infected sites which can percolate only using the end of the snake in addition. Of course, some care is needed in order not to infect any other site by accident before the snake can reach its end.

\begin{proof}[Proof of Lemma~\ref{lem:time}]
We will need the following technical lemma.
\begin{lem}
\label{cor:snakemodified}
Let $d''\geq 6$. There is a $3$-snake $S$ of dimension $d''$ and length $T$ such that the following conditions all hold.
\begin{enumerate}
\item $S_{T-3}=[1,0,1,0,1][0]^{d''-5}$.
\item $S_{T-2}=[1,0,1][0]^{d''-3}$.
\item $S_{T-1}=[1][0]^{d''-1}$.
\item \label{cond:cor1}$S_T=[0]^{d''}$.
\item \label{cond:cor3}$\|S_t\|>3$ for every $t<T-3$.
\item \label{cond:cor2}$T\geq s(d''-1)$.
\end{enumerate}
\end{lem}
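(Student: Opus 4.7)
The plan is to build $S$ by taking a long $3$-snake in dimension $d''-1$, embedding it isometrically into the codimension-one subcube $[1][*]^{d''-1}$ of $\{0,1\}^{d''}$, and appending $[0]^{d''}$ as the final site $S_T$ via one last step flipping the first coordinate. The resulting length $T = s(d''-1) + 1$ gives condition~\ref{cond:cor2} immediately.

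To arrange the prescribed tail, I invoke Proposition~\ref{prop:Evdokimov} to get a $3$-snake $S^{**}$ of length $L := s(d''-1)$ in $\{0,1\}^{d''-1}$. Since both translation (XOR with a fixed vertex) and coordinate permutations preserve the $3$-snake property and the length, I first XOR every site by $S^{**}_L$ so that the new final site equals $0 \in \{0,1\}^{d''-1}$, and then, using the Remark after the $k$-snake definition (which tells us that any three consecutive steps flip three distinct coordinates), apply a coordinate permutation that sends the two coordinates flipped in the last two steps to $2$ and $4$ respectively (possible since $d''-1 \geq 4$). This produces $\tilde S$ satisfying $\tilde S_L = 0$, $\tilde S_{L-1} = e_2$ and $\tilde S_{L-2} = e_2 + e_4$. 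Setting $S_t := [1]\tilde S_t$ for $0 \leq t \leq L$ and $S_T := [0]^{d''}$ then verifies conditions~1--4 by inspection.

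It remains to check the $3$-snake property of $S$ and condition~\ref{cond:cor3}. Pairs $(S_t, S_{t'})$ with both $t, t' \leq T-1$ inherit the distance condition from $\tilde S$ since the prepending embedding is an isometry onto $[1][*]^{d''-1}$. The only genuinely new pairs involve $S_T = [0]^{d''}$, for which $d(S_t, S_T) = \|S_t\| = 1 + \|\tilde S_t\|$. The key point -- and what would appear to be the main obstacle, were one to pick the maximum snake arbitrarily -- is that condition~\ref{cond:cor3} comes for free here: because $\tilde S_L = 0$ by construction, the $3$-snake property of $\tilde S$ itself already gives $\|\tilde S_t\| = d(\tilde S_t, \tilde S_L) \geq 3$ for every $t \leq L - 3 = T - 4$, so $\|S_t\| \geq 4 > 3$ on that range, which simultaneously yields $d(S_t, S_T) \geq 3$ and condition~\ref{cond:cor3}. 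The single remaining pair $t = T - 3$ gives $d(S_{T-3}, S_T) = \|S_{T-3}\| = 3$ by direct computation, completing the verification.
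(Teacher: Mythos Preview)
Your proof is correct and follows essentially the same approach as the paper: embed a maximal $(d''-1)$-dimensional $3$-snake into the subcube $[1][*]^{d''-1}$, append the site $[0]^{d''}$, and use hypercube automorphisms (XOR-translation and coordinate permutation) to pin down the last few sites. The only cosmetic difference is that you apply the coordinate permutation inside the $(d''-1)$-cube before embedding, whereas the paper embeds first and then remarks that a permutation fixes conditions~1--3; your version is the more explicit of the two. (Minor quibble: invoking Proposition~\ref{prop:Evdokimov} is unnecessary, as the existence of a $3$-snake of length $s(d''-1)$ is simply the definition of $s$.)
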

\begin{proof}[Proof of Lemma~\ref{cor:snakemodified}]
It suffices to satisfy conditions~\ref{cond:cor1}-\ref{cond:cor2} and then to permute the coordinates to also fulfil the other conditions. One can achieve conditions~\ref{cond:cor1}-\ref{cond:cor2} as follows.

Let $S'$ be a $d''-1$-dimensional $3$-snake of maximal length with $S'_{s(d''-1)}=[0]^{d''-1}$ (to obtain it compose a $d''-1$-dimensional $3$-snake of maximal length by a suitable isomorphism of the hypercube). Then we set $S_t=[1]S'_t$ for all $0\leq t\leq s(d''-1)$, $T=s(d''-1)+1$ and $S_{T}=[0]^{d''}$. Conditions~\ref{cond:cor1} and~\ref{cond:cor2} are clearly satisfied. Furthermore, since $S'$ is a $3$-snake ending in $[0]^{d''-1}$, we have that $\|S'_t\|\geq 3$ for all 
$t\leq s(d''-1)-3$. Thus, condition~\ref{cond:cor3} does hold by construction and $S$ is indeed a $3$-snake.
\end{proof}

For convenience denote $d':=d-3$ and $d'':=d-9$. Let $S$ be as provided by Lemma~\ref{cor:snakemodified}. Let the initial set of infected sites $I$ be defined as follows (see Figure~\ref{fig:cube}).
\begin{itemize}
\item Infect $[0]^9S_0$.
\item For $i\in\{1,2,3\}$ set
\[S^i:=\left\{S_{T-i-3j},0\leq j\leq \frac{T-i}{3}\right\}\,.\]
Infect
\[I_0:=[0]^3\overline{[0][1]}[0]^4S^1\cup[0]^5\overline{[0][1]}[0]^2S^2\cup[0]^7\overline{[0][1]}S^3\,.\]
Do note that we do not include neighbours of the end of the snake $[0]^9S_T$.
\item Moreover, infect
\[J_1:=[1,1][*]^{d'+1},\; J_2:=\overline{[0][1]}[1][0]^{d'}\;\mathrm{and}\;J_3:=\overline{[0][1]}[0]\overline{[1,1][0,0]^{\frac{d'-2}{2}}}\,.\]
Recall that $\overline{[1,1][0,0]^k}=\bigcup_{0\le l\leq k}[0,0]^l[1,1][0,0]^{k-l}$.
\end{itemize}

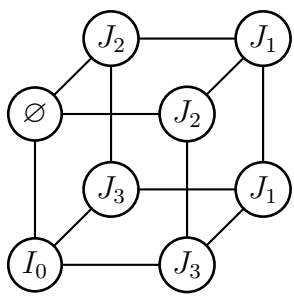
\begin{figure}
\begin{center}
\begin{tikzpicture}
  \SetGraphUnit{3}
\GraphInit[vstyle=Normal]
  \SetVertexNormal[Shape      = circle,
                   LineWidth  = 1pt,
                   MinSize = 20pt]
  \SetVertexMath
  \Vertex[x=0,y=0,L=I_0]{A}
  \Vertex[x=2,y=0,L=J_3]{B}
  \Vertex[x=2,y=2,L=J_2]{C}
  \Vertex[x=0,y=2,L=\varnothing]{D}
  \Vertex[x=1,y=1,L=J_3]{E}
  \Vertex[x=3,y=1,L=J_1]{F}
  \Vertex[x=3,y=3,L=J_1]{G}
  \Vertex[x=1,y=3,L=J_2]{H}
  \Edge(A)(B)
  \Edge(A)(D)
  \Edge(A)(E)
  \Edge(B)(C)
  \Edge(B)(F)
  \Edge(C)(D)
  \Edge(C)(G)
  \Edge(D)(H)
  \Edge(E)(F)
  \Edge(E)(H)
  \Edge(F)(G)
  \Edge(G)(H)
\end{tikzpicture}
\end{center}
\caption{Each vertex of the cube in this picture represents a $d'$ dimensional hypercube, so that only the first three dimensions of $[*]^d$ are visible. We indicate the positions of the different parts of the initial infected set $I$. $J_1$ consists of two entire $d'$-dimensional subcubes, $J_2$ has one site in each of the two subcubes indicated, $J_3$ has $\frac{d'}{2}$ sites at distance $4$ in each of the two subcubes indicated. Finally, $I_0$ contains two neighbours of each site in the $3$-snake $[0]^{9}S$ (except its end). The $3$-snake in question lies in the same $d'$ dimensional subcube as $I_0$.}
\label{fig:cube}
\end{figure}

We claim that $[0]^9S$ is infected one site at a time, that no site outside $[0]^9S$ is infected strictly before $S_{T}$ and that percolation occurs. However, before we turn to the proof of those claims, let us establish some properties of the configuration.
\begin{itemize}
\item $I_0$, $J_2$ and $J_3$ have pairwise no common neighbours. Indeed,
\begin{itemize}
\item $J_2$ and $J_3$ have no common neighbours by parity.
\item $J_2$ and $I_0$ have no common neighbours, since $[0]^d\not\in I_0$ and two of the first three coordinates are different.
\item $J_3$ and $I_0$ have no common neighbours. To see this, consider a site $j$ in $J_3$ at distance $2$ from $i\in I_0$. Those two differ in one of the first three coordinates, so $i$ has a neighbour in $[0]^3\overline{[1,1][0,0]^{\frac{d'-2}{2}}}$. Then $\|i\|\in\{1,3\}$ and by condition~\ref{cond:cor3} of Lemma~\ref{cor:snakemodified} and parity $i$ is necessarily a neighbour of $[0]^9S_{T-2}$ (recall that $I_0$ does not contain neighbours of $[0]^9S_T$). Hence, $i\in[0]^3\overline{[0]^5[1]}[1,0,1][0]^{d''-3}$. Notice that $i$ necessarily has $2$ adjacent $1$s, since it has a neighbour in $[0]^3\overline{[1,1][0,0]^{\frac{d'-2}{2}}}$. However, this is the case only if $i=[0]^8[1,1,0,1][0]^{d''-3}$, which has no neighbour in $[0]^3\overline{[1,1][0,0]^{\frac{d'-2}{2}}}$ -- a contradiction. 
\end{itemize}
\item The only couples of sites in $J_3$ at distance (at most) $2$ are of the form $([1,0,0]x,[0,1,0]x)$ for $x\in\overline{[1,1][0,0]^{\frac{d'-2}{2}}}$. Indeed, if the first two coordinates differ, the distance is at most $2$ only if all other coordinates are identical and if they do not differ, sites in $J_3$ are at distance $4$.
\item Every site $i_1\in I_0$ has a unique other site $i_2\in I_0$ at distance (at most) 2. Indeed, consider $d(i_1,i_2)\leq 2$ and argue that $i_1$ and $i_2$ only differ in coordinates $4$-$9$. If the last $d''$ coordinates differ, by less than 3, as $S$ is a $3$-snake, the time in the snake has different remainder modulo $3$ for the two sites and thus, $2$ of the the first $9$ coordinates must differ. If the last $d''$ coordinates differ by $3$ or more, $S$ being a $3$-snake implies $d(i_1,i_2)\geq 3$. Clearly, there is a unique site which differs from $i_1$ only in coordinates $4$-$9$.
\item $[0]^9S_t$ has common neighbours with $J_3$ only for $t=T-1$. Indeed, by condition~\ref{cond:cor3} of Lemma~\ref{cor:snakemodified} and parity one has $t\in\{T-1,T-3\}$. For $T-3$ it suffices to note that $[0]^9S_{T-3}$ has no two consecutive $1$s.
\item $[0]^9S_t$ has common neighbours with $J_2$ only for $t=T$ (More generally, $[0]^3a$ is at distance $2$ from $J_2$ only for $a=[0]^{d'}$).
\end{itemize}

\paragraph{Claim~1} At time $0\leq t<T$ the set of infected sites is $I\cup \{[0]^9S_{t'},t'\leq t\}$.
\begin{proof}[Proof of Claim~1]
We proceed by induction. 
\subparagraph{Base:} We show that $I\setminus \{[0]^9S_0\}$ is stable i.e. no uninfected site has three infected neighbours. Consider an uninfected site $s$ and split the reasoning in cases depending on $s$.
\begin{itemize}
\item If $s\in [0,0,1][*]^{d'}$, then it has at most two neighbours in $J_2$ (since $|J_2|=2$) and at most one neighbour in $I_0$. However, $J_2$ and $I_0$ have no common neighbours, so it has at most $2$ infected neighbours.
\item If $s\in \overline{[0][1]}[1][*]^{d'}\setminus J_2$, then it has one neighbour in $J_1$ (since this is a subcube), at most one neighbour in $J_2$, at most one neighbour in $J_3$ and no neighbours in $I_0$. However, $J_2$ and $J_3$ have no common neighbours, so it has at most $2$ infected neighbours.
\item If $s\in \overline{[0][1]}[0][*]^{d'}\setminus J_3$, then it has one neighbour in $J_1$ (since this is a subcube), at most one neighbour in each of $I_0$, $J_2$ and $J_3$. Indeed, for $J_3$ we know that all sites with (at least) two neighbours in $J_3$ are not in $\overline{[0][1]}[0][*]^{d'}$. However, $I_0$, $J_2$ and $J_3$ have pairwise no common neighbours, so we are done.
\item If $s\in[0]^3[*]^{d'}\setminus I_0$, then it has no neighbours in $J_1$ or $J_2$ and at most two in $J_3$, but since $J_3$ and $I_0$ have no common neighbours, it suffices to prove that $s$ cannot have $3$ neighbours in $I_0$. However, we know that each site in $I_0$ has common neighbours with only one other site in $I_0$, which concludes the proof of the base.
\end{itemize}

\subparagraph{Step:} Assume that at time $0\leq t<T-1$ the infected sites are $I\cup \{[0]^9S_{t'},t'\leq t\}$. We only need to check that none of the uninfected neighbours of $[0]^9S_t$ other than $[0]^9S_{t+1}$ has 3 infected neighbours at time $t$. 

As we know, $J_2$ and $J_3$ have no common neighbours with $[0]^9S_t$, so they cannot contribute. In $[0]^9[*]^{d''}$ the only infected site with neighbours in common with $[0]^9S_t$ is $[0]^9S_{t-2}$ (or none if $t\leq 1$), as $S$ is a $3$-snake. But their common neighbour different from $[0]^9S_{t-1}$ has no other infected neighbours in $[0]^9[*]^{d''}$ (since $S$ is a $3$-snake), does not neighbour $J_1$ (since it is in $[0]^9[*]^{d''}$), and nor does it neighbour $I_0$ (since the only neighbours of $I_0$ in $[0]^9[*]^{d''}$ are in $[0]^9S$ by construction).

Furthermore, the only other infected sites in $[0]^3[*]^{d'}\setminus[0]^9[*]^{d''}$ with common neighbours with $[0]^9S_t$ are the $4$ neighbours of $[0]^9S_{t\pm 1}$ in $I_0$. Recall that each of those has a common neighbour only with one other, so the only sites with three neighbours among those four and $[0]^9S_t$ are $[0]^9S_{t\pm 1}$. Moreover, $J_1$ does not contribute, as before, because those $4$ sites are in $[0]^3[*]^{d'}$.

Finally, the only infected site outside $[0]^3[*]^{d'}$ with common neighbours with $[0]^9S_t$ is $[1,1][0]^7S_t\in J_1$. Those common neighbours being outside $J_1$ and $[0]^3[*]^{d'}$, they cannot have more than $2$ infected neighbours in those two subcubes, which exhausts all possible cases and completes the induction step.
\end{proof}
\paragraph{Claim~2} The set $J_1\cup J_2\cup J_3\cup \{[0]^9S_{T-1}\}$ percolates.
\begin{proof}[Proof of Claim~2]
We have the following infections (we do not claim that they happen at different times or in this order).
\begin{itemize}
\item $\overline{[0][1]}[0]^7[1][0]^{d''-1}$ is infected by $[0]^9S_{T-1}$, $J_1$ and $J_3$.
\item $\overline{[0][1]}[0]^{d'+1}$ is infected by the previous one, $J_1$ and $J_2$.
\item $\overline{[0][1]}[0]\overline{[1][0]^{d'-1}}$ is infected by the previous one, $J_1$ and $J_3$.
\item $\overline{[0][1]}[0][*]^{d'}$ is infected by the previous one and $J_1$. Indeed, for all $2\le k\le d'$ every site in $\overline{[0][1]}[0]\overline{[1]^k[0]^{d'-k}}$ has at least $2$ neighbours in $\overline{[0][1]}[0]\overline{[1]^{k-1}[0]^{d'-k+1}}$ and one neighbour in $J_1$, so those sets become infected successively by induction.
\item $[0]^3[*]^{d'}$ is infected by the previous one and $[0]^9S_{T-1}$. Indeed, all sites in $[0]^3[*]^{d'}$ have two infected neighbours from the previous step, so they only need one more in order to be infected. But since $[0]^3[*]^{d'}$ is connected and contains the infected site $[0]^9S_{T-1}$, it does become infected entirely.
\item $\overline{[0][1]}[1][*]^{d'}$ is infected by $\overline{[0][1]}[0][*]^{d'}$, $J_1$ and $J_2$ just like in the previous step.
\item $[0,0][1][*]^{d'}$ is infected by the ones in the previous two steps.
\end{itemize}
Hence, the whole hypercube is infected.
\end{proof}
The lemma follows trivially from the two claims.
\end{proof}

The next lemma establishes our upper bound on the percolation time.
\begin{lem}
\label{lem:uppermain}
Let $r\geq 3$. Then for all $d\geq r$
\[M_r(d)\leq (4r+2)\frac{2^d}{d}\,.\]
\end{lem}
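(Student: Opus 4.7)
The plan is to combine the edge-isoperimetric inequality in the hypercube with a double-count of the edges across the boundary of the infected set at each time step. Writing $B_t$ for the infected set at time $t$ and $A_{t+1}:=B_{t+1}\setminus B_t$, every vertex of $A_{t+1}$ has at least $r$ neighbours in $B_t$, while every vertex of $B_{t+1}^c$ has at most $r-1$ such neighbours (otherwise it would have been infected at step $t+1$). Counting edges in the boundary $e(B_t,B_t^c)$ from both sides yields
\[
r|A_{t+1}|\;\le\;e(B_t,B_t^c)\;\le\;d|A_{t+1}|+(r-1)|B_{t+1}^c|.
\]
Invoking the edge-isoperimetric inequality $e(S,S^c)\ge s(d-\log_2 s)$ with $s=\min(|S|,|S^c|)$ applied to $S=B_t$, and using $|B_{t+1}^c|=|B_t^c|-|A_{t+1}|$, one obtains the per-step lower bound
\[
(d-r+1)|A_{t+1}|\;\ge\;s_t(d-\log_2 s_t)-(r-1)|B_t^c|,
\]
where $s_t:=\min(|B_t|,|B_t^c|)$.

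This bound is already strong enough in the ``endgame'' phase where $|B_t^c|<2^{d-r+1}$: there it forces $|A_{t+1}|$ to be a positive fraction of $|B_t^c|$, so that $|B_t^c|$ decays geometrically and only $O(d\log d)$ endgame steps occur. The remaining task is then to bound the time spent in the complementary ``bulk'' regime, where $|B_t|$ is a constant fraction of $2^d$. For $r\ge 3$ the per-step bound degenerates in the bulk (reflecting the fact that large subcubes are stable under $r$-threshold dynamics), so I would aggregate the inequality across all bulk time steps: summing and trading $\sum_t|A_{t+1}|\le 2^d-|B_0|$ on one side against $\sum_t s_t(d-\log_2 s_t)$ and $(r-1)\sum_t|B_t^c|$ on the other. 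Since $s_t$ stays of order $2^d$ in the bulk, the aggregated isoperimetric contribution is of order $T\cdot 2^d$; rearranging should then give $T\lesssim 2^d/d$, with the explicit constant $(4r+2)=2(2r+1)$ dropping out of the optimal choice of threshold separating the two regimes.

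The main obstacle is performing the bulk aggregation tightly enough to extract the sharp constant, because a naive sum of the per-step bound collapses for $r\ge 3$. The factor $(4r+2)=2(2r+1)$ in the target bound hints at a two-step balance (the factor $2r+1$ per two consecutive time steps), suggesting that one should telescope the per-step inequality over pairs of consecutive windows rather than a single step: this would additionally exploit the fact that $A_{t+2}\subseteq N(A_{t+1})$, ruling out $B_t$ staying close to a stable subcube configuration across two consecutive rounds. Combining this pairing with the straightforward endgame analysis should yield the claimed bound $M_r(d)\le(4r+2)\cdot 2^d/d$.
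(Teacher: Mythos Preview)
Your approach has a genuine gap in the bulk phase, and the proposed fix does not close it.

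The per-step inequality $(d-r+1)|A_{t+1}|\ge s_t(d-\log_2 s_t)-(r-1)|B_t^c|$ is vacuous whenever $|B_t^c|$ is of order $2^d$. Indeed, at $|B_t|=|B_t^c|=2^{d-1}$ one has $s_t(d-\log_2 s_t)=2^{d-1}$ while $(r-1)|B_t^c|=(r-1)2^{d-1}$, so for $r\ge 3$ the right-hand side is $\le -2^{d-1}$. Aggregating over $T$ bulk steps therefore yields $(d-r+1)\sum_t|A_{t+1}|\ge -cT\cdot 2^d$, which imposes no upper bound on $T$ at all. Your two-step pairing idea does not repair this: the inclusion $A_{t+2}\subseteq N(A_{t+1})$ only gives the \emph{upper} bound $|A_{t+2}|\le d|A_{t+1}|$, which is useless for forcing progress. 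And you cannot hope to lower-bound $|A_{t+1}|$ stepwise, since the lower-bound construction in the paper infects exactly one site per round for $\Theta(2^d/d)$ rounds; any argument must be global. The edge-isoperimetric inequality is simply the wrong tool here, because its strength $\Theta(2^d)$ in the bulk is swamped by the trivially stable $(r-1)|B_t^c|$ term for every $r\ge 3$.

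The paper's argument is entirely different and does not use isoperimetry. It observes that each vertex $v$ has at most $r-1$ neighbours $u$ with $t_u\le t_v-2$, so there are at most $(r-1)2^d$ edges with time difference $\ge 2$. Calling a vertex \emph{bad} if at least $d/2$ of its edges have this property, there are at most $4(r-1)2^d/d$ bad vertices, hence at most that many time steps at which a bad vertex is infected. At every other time step some good vertex $v$ is infected, and more than $d/2$ of its neighbours satisfy $t_u\in\{t_v-1,t_v,t_v+1\}$. Picking one such $v$ per good time and double-counting these neighbours (each counted at most three times) gives more than $2^d$ vertices once the number of good times exceeds $6\cdot 2^d/d$, a contradiction. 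This is what produces the constant $4(r-1)+6=4r+2$.
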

\begin{proof}
Assume that $M_r(d)> (4r+2)\frac{2^d}{d}$ for some $d$ and consider a percolating set of initially infected sites, which achieves the maximal time. For each site $v$ of the hypercube denote $t_v$ its percolation time. Note that any site $v$ has at most $r-1$ neighbours $u$ such that $t_v-t_u>1$, so there are at most $(r-1)2^d$ edges $uv$ of the hypercube such that $|t_v-t_u|\geq 2$. Call a site $v$ \emph{bad} if it has at least $\frac{d}{2}$ neighbours $u$ such that $|t_v-t_u|\geq 2$ and \emph{good} otherwise. Thus, there are at most $\frac{2^{d+2}(r-1)}{d}$ bad sites in total, since an edge contributes to at most $2$ of them. But then there are at most $\frac{2^{d+2}(r-1)}{d}$ values of $t$ when a bad site becomes infected and in particular there are more than $\frac{2^{d}}{d}((4r+2)-4(r-1))=6\frac{2^d}{d}$ values when a good site becomes infected. But if $v$ is a good site, then at least $\frac{d}{2}$ of its neighbours are infected at time $t_v-1$, $t_v$ or $t_v+1$. Hence, applying this to one good site for each time when there is one, one obtains that there are more than
\[6\cdot \frac{2^{d}}{d}\cdot\frac{d}{2}\cdot\frac{1}{3}=2^d\]
infected sites, since each one is counted up to three times -- a contradiction.
\end{proof}
\begin{rem}
In order to obtain a better constant with the same proof, bad sites should be defined to have $Cd$ edges of the type specified and $C$ should then be optimised.
\end{rem}
The main result now follows immediately.
\begin{proof}[Proof of Theorem~\ref{th:main}]
Let us first prove the lower bound. For $r>3$, consider a configuration giving $M_3(d-r+3)$ in a $(d-r+3)$-dimensional subcube and infect the rest of the hypercube. Then sites in that subcube follow exactly the 3-neighbour bootstrap process restricted to it and thus the problem is reduced to $r=3$. For $r=3$ the result follows directly from Lemma~\ref{lem:time} and Proposition~\ref{prop:Evdokimov}, so we are done when $d$ is odd. Consider $d\geq 15$ even and denote by $A$ a $d-1$ dimensional percolating set achieving $M_3(d-1)$. Then we claim that the $d$-dimensional set $A':=[*]A$ percolates in exactly the same time. Indeed, by an immediate induction at any time $t$ a site $[0]a\in [*]^{d}$ is infected if and only if $[1]a$ is, so for any uninfected site $[0]b$ the only infected neighbours are in $[0][*]^{d-1}$ and so, by induction it becomes infected if and only if $b$ becomes infected at time $t$ in the $d-1$-dimensional process.

The upper bound was proved in Lemma~\ref{lem:uppermain}.
\end{proof}

\section{Conclusion and open problems}
In conclusion, our result exhibits a significant difference between the $2$ and $3$-neighbour models on the hypercube. The reason why our method does not work for the $2$-neighbour case is that parasite infections are inevitable. More precisely, there necessarily appear additional infections around an infected path -- at each `corner' of the path (in the hypercube a path has `corners' at each step) at the first time step and more afterwards.

Further understanding of the different behaviours should be of use in attacking the $3$-neighbour model on the hypercube with random initial condition, by showing what anomalies one needs to take into consideration. We list here a few of the questions raised by the present work, not necessarily aiming directly at solving that model.

The first natural question to ask in view of our work is to determine the exact order of the maximal percolation time. We conjecture that the upper bound is tight up to a constant.
\begin{conj}
Prove that for all fixed $r\geq 3$
\[M_r(d)=\Theta\left(\frac{2^d}{d}\right)\,.\]
\end{conj}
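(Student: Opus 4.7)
Since Lemma~\ref{lem:uppermain} already establishes $M_r(d) \leq (4r+2)\cdot 2^d/d$, the conjecture reduces to showing $M_r(d) = \Omega(2^d/d)$. The present lower bound loses a $(\log d)^{O(1)}$ factor solely because the reduction of Lemma~\ref{lem:time} passes through $s(d-10)$, and Proposition~\ref{prop:Evdokimov} is polylogarithmically short of Singleton's easy upper bound $s(d)\leq 2^d/(d-2)$. A matching lower bound would therefore follow either from sharpening Evdokimov's bound to $s(d)=\Theta(2^d/d)$, which is a long-standing open problem in the snake-in-the-box literature, or from an alternative bootstrap-specific construction. I would pursue the second route.

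Specifically, I would allow the path $(S_t)$ used in Lemma~\ref{lem:time} to have a small number of ``collisions'', meaning pairs $(t,t')$ with $|t-t'|\geq 3$ yet $d(S_t,S_{t'})\leq 2$, and neutralise each collision locally by extra infected gadgets placed around it. A natural candidate path is an induced Hamiltonian path of Gray-code type in a suitable codimension subcube: this automatically satisfies $d(S_t,S_{t'})\geq 2$ for $|t-t'|\geq 2$ since the path is induced, but not the full $3$-spread condition. The proof of Claim~1 of Lemma~\ref{lem:time} would then need to be revisited to (a) quantify precisely what goes wrong at each collision, namely which uninfected sites might be pushed over the $3$-neighbour threshold at the wrong time by two close snake sites sharing extra common neighbours, and (b) show that these local violations can be repaired by adding $O(1)$ fresh infected sites per collision, chosen so that the repair gadgets do not themselves trigger further premature infections.

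The main obstacle is precisely this repair step. Each new gadget introduces new candidates for premature infection, and the total repair budget must remain at most a small constant fraction of the path length, so that the effective percolation time is still $\Omega(2^d/d)$. Analysing the collision statistics of standard Gray codes and designing local repair gadgets with no cascading side-effects is where I expect the combinatorial difficulty to concentrate. Should this surgery prove too delicate, a fallback is to build $(S_t)$ recursively with respect to the product structure $\{0,1\}^d=\{0,1\}\times\{0,1\}^{d-1}$, assembling it from two copies of a shorter construction in each half of the hypercube and verifying that the only collisions occur as controllable short-range interactions between the two halves.
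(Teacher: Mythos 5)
This statement is an open conjecture in the paper, not a theorem: the author proves only $M_r(d)=\frac{2^d}{d}(\log d)^{-O(1)}$ and explicitly leaves the matching $\Omega(2^d/d)$ lower bound open, noting it would follow if the lower bound $s(d)\geq \frac{2^d}{d(\log d)^2}$ of Evdokimov were improved to $\Theta(2^d/d)$. So there is no paper proof to compare against, and your proposal is a research plan rather than a proof; you acknowledge as much. Your diagnosis of where the polylogarithmic loss occurs is correct, and the idea of tolerating and locally repairing violations of the spread-$3$ condition is a reasonable direction. But as written the plan has concrete gaps beyond the admitted ones. First, an ``induced Hamiltonian path'' in the hypercube does not exist for $d\geq 2$: induced paths are exactly $2$-snakes, and the maximal snake length is a constant fraction of $2^d$ strictly below $2^d$, so your candidate path must already be a near-extremal $2$-snake rather than a Gray code. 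Second, and more seriously, the collision count is not obviously controllable: a generic induced path occupying a constant fraction of the hypercube can have on the order of $d^2$ distance-$2$ collisions per vertex, i.e.\ $\Theta(d^2 2^d)$ collisions in total, which is vastly larger than the path length; a per-collision $O(1)$ repair budget then destroys the construction. Keeping the number of collisions down to $o(T)$ while keeping $T=\Theta(2^d/d)$ is essentially the hard part of the $3$-snake problem in disguise, since the whole point of the spread-$3$ condition in Lemma~\ref{lem:time} is that each snake site needs two dedicated infected neighbours that must not become third neighbours of any other uninfected site.

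Finally, note that the reduction itself already caps what any such construction can achieve: Singleton's upper bound $s(d)\leq \frac{2^d}{d-2}$ shows the snake route cannot beat $\Theta(2^d/d)$, which is consistent with Lemma~\ref{lem:uppermain}, but it also means that any ``collision-tolerant'' path you build that successfully drives the bootstrap process one site at a time is morally a $3$-snake with a sublinear number of marked defects. If you can construct such an object of length $\Theta(2^d/d)$ you would very likely also resolve the corresponding open problem on circuit codes of spread $3$, so you should expect the difficulty to be at least that of the known open problem, not a routine repair argument.
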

It should be noted, that this result would follow from the same proof, if one establishes the corresponding lower bound for the maximal length of $3$-snakes, improving the result of~\cite{Evdokimov76}.

Secondly, a probably difficult question is to determine the random percolation time. The probabilistic counterpart of our extremal result would be as follows.
\begin{question}
Conditionally on percolating, what is the order of the percolation time if the initially infected sites are chosen randomly and independently with probability $p(d)$?
\end{question}
It would, namely, be interesting to see if exponentially large times such as the ones we give manage to alter the mean percolation time despite their low probability of occurrence.

Finally, in view of the more recent development of $\mathcal{U}$-bootstrap percolation in $2$ dimensions~\cite{Bollobas14,Bollobas15} and, currently in higher, but fixed number of dimensions, one could ask for similar results about models more general than the $r$-neighbour model, but still on the hypercube (e.g. a site is infected if some fixed subset, defined up to isomorphism, of its $2$-neighbourhood is infected). An answer of satisfactory generality to the following question might need to wait until $\mathcal{U}$-bootstrap percolation setting is extended to the hypercube, but it is worth investigating nonetheless.
\begin{question}
When is the order of the maximal time of $\mathcal{U}$-bootstrap percolation on the hypercube up to a constant given by the maximal length of a $k$-snake for some $k$ and how is $k$ determined by $\mathcal{U}$?
\end{question}

\section*{Acknowledgements}
The author would like to thank Stephen G. Z. Smith -- for numerous particularly helpful discussions and encouragement, Dmitry Chelkak -- for crucial bibliographic help and Lyuben Lichev -- for meticulous proofreading.

\bibliographystyle{plain}
\bibliography{D:/Master/Study/LaTeX/Bib/Bib}
\end{document}